\newtheorem{thm}{Theorem}[section]
\newtheorem{prop}[thm]{Proposition}
\newtheorem{cor}[thm]{Corollary}
\newtheorem{lem}[thm]{Lemma}
\newtheorem{obs}[thm]{Observation}
\newtheorem{prob}[thm]{Problem}
\numberwithin{equation}{section}
\newcommand{\sn}{\mathfrak{S}_n}
\newcommand{\zsn}{\mathbb{Z}[\sn]}
\newcommand{\zqq}{\mathbb{Z}[\qp12, \qm12]}
\newcommand{\qp}[2]{q^{\frac{#1}{#2}}}
\newcommand{\qm}[2]{q^{\negthinspace\Bar\,\frac{#1}{#2}}}
\newcommand{\qdiff}{\qp12 - \qm12}
\newcommand{\wT}{\widetilde T}
\newcommand{\hnq}{H_n(q)}
\newcommand{\anq}{\mathcal{A}_n(q)}
\newcommand{\wtc}[2]{\widetilde{C}_{#1}(#2)}
\newcommand{\imm}[1]{\mathrm{Imm}_{#1}}
\newcommand{\sumsb}[1]{\sum_{\substack{#1}}}  
\newcommand{\pinv}
{\textsc{inv}_{\ntnsp P}}
\newcommand{\defeq}{:=} 
\newcommand{\avoidsp}{avoids the patterns $3412$ and $4231${}}
\newcommand{\avoidingp}{avoiding the patterns $3412$ and $4231${}}
\newcommand{\tr}{{\negthickspace \top \negthickspace}}
\newcommand{\ntnsp}{\negthinspace}
\newcommand{\nTksp}{\negthickspace\negthickspace}
\newcommand{\bp}{\begin{prob}}
\newcommand{\ep}{\end{prob}}
\newcommand{\inc}{\mathrm{inc}}
\newcommand{\trspace}[1]{\mathcal{T}(#1)}
\newcommand{\upparrow}{\big \uparrow \nTksp \phantom{\uparrow}}
\newcommand{\asc}{\mathrm{asc}}
\newcommand{\LLT}{\mathrm{LLT}}
\newcommand{\llt}[1]{\mathrm{LLT}_{#1,q}}
\title{LLT Polynomials and Hecke Algebra Traces}
\author{Alejandro H. Morales\thanks{AHM is partially supported by NSF grant DMS-2154019.}
\institute{LACIM \\Universit\'e du Qu\'ebec \`a Montr\'eal \\Montr\'eal QC, Canada}
\email{\href{mailto:morales_borrero.alejandro@uqam.ca}{morales\_borrero.alejandro@uqam.ca}}
\and
Mark A. Skandera \qquad\qquad Jiayuan Wang\thanks{JW is partially supported by an AMS-Simons travel grant.}
\institute{Department of Mathematics\\Lehigh University\\
Bethlehem PA, USA}
\email{\quad \href{mailto:mas906@lehigh.edu}{mas906@lehigh.edu} \quad\qquad \href{mailto:jiw922@lehigh.edu}{jiw922@lehigh.edu}}
}
\begin{document}
\maketitle

\begin{abstract}
We show that coefficients in unicellular LLT polynomials are evaluations of Hecke algebra traces at Kazhdan--Lusztig basis elements. We express these in terms of traditional trace bases, induction, and Kazhdan--Lusztig $R$-polynomials.
\end{abstract}

\section{Introduction}

The study of proper colorings of
a graph $G$ is a fundamental topic in discrete mathematics. Stanley \cite{StanSymm} defined the chromatic symmetric function $X_{G,q}$ which is a symmetric function generalization of the chromatic polynomial. This function was generalized by Shareshian and Wachs \cite{SWachsChromQF}: for a graph $G=(V,E)$ let 
\(
X_{G,q} := \sum_{ \kappa} q^{\asc(\kappa)} x_{\kappa(1)} x_{\kappa(2)}\cdots,
\) 
where the sum is over all proper
colorings $\kappa:V(G)\to \mathbb{N}$ of $G$ and $\asc(\kappa)$ denotes the number of ascents of $\kappa$, pairs $(i,j)$ with $i<j$ such that $\kappa(i)<\kappa(j)$.
When $G=\inc(P)$ is an incomparability graph of a unit interval order $P$, $X_{G,q}$ is a symmetric function. There are important positivity conjectures about $X_{\inc(P),q}$ like the $e$-positivity conjecture of Stanley--Stembridge--Shareshian--Wachs.

In another context, the functions $X_{\inc(P),q}$ appeared in the
study of the space of {\em diagonal harmonics}. Let $\LLT_{\inc(P),q} := \sum_{ \kappa} q^{\textup{asc}(\kappa)} x_{\kappa(1)}
x_{\kappa(2)}\cdots$, where the sum is over arbitrary vertex
colorings of $\inc(P)$. This is also a symmetric function called a 
{\em unicellular LLT polynomial}, a special case of a family of symmetric functions introduced by
Lascoux--Leclerc--Thibon in 1997 in a different context. These functions
$\LLT_{\inc(P),q}$ appear in the {\em Shuffle conjecture} of diagonal
harmonics \cite{HHLRU05} proved by
Carlsson--Mellit \cite{CM}. In their proof of the shuffle conjecture, they show that both these symmetric functions are related by a {\em plethystic substitution}:
\[X_{\inc(P),q}[X] = (q-1)^{-n} \LLT_{\inc(P),q}[(q-1)X],\]
where $n$ is the size of $P$. From work of Grojnowski and Haiman  \cite{GroHai}, $\LLT_{\inc(P)}$ are Schur positive and it is an open question to find a combinatorial interpretation for this expansion.

An important basis of the Hecke algebra $H_n(q)$ is the (modified, signless) {\em Kazhdan--Lusztig basis} defined by 
\(
\wtc wq := q^{\frac{\ell(w)}{2}} C'_w(q) = \sum_{u\leq v} P_{u,w}(q) T_w,
\)
where $\{T_w \mid w \in \mathfrak{S}_n\}$ is the natural basis of $H_n(q)$, $P_{v,w}(q)$ are the {\em Kazhdan--Lusztig polynomials} and $\leq$ denotes the Bruhat order of $\mathfrak{S}_n$.
It is known \cite{CHSSkanEKL} that the various expansions  of the chromatic quasi-symmetric function $X_{\inc(P),q}$ can be viewed as evaluations of traces at $\{\widetilde{C}_w(q) \mid w \text{ avoiding } 312\}$,
when $P = P(w)$ is a unit interval order corresponding to $w$,
\begin{equation*}
    X_{\inc(P(w)),q} 
    = \sum_{\lambda \vdash n} \epsilon_q^\lambda(\wtc wq) m_\lambda 
    = \sum_{\lambda \vdash n} \eta_q^{\lambda}(\wtc wq) f_\lambda 
    = \sum_{\lambda \vdash n} \chi_q^{\lambda^\tr}(\wtc wq) s_\lambda 
    = \cdots,
\end{equation*}
where $\epsilon_q^\lambda$, $\eta_q^\lambda$, $\chi_q^{\lambda^\tr}$ are induced sign, induced trivial, and irreducible characters of $\hnq$. In this context, the $e$-positivity conjecture of $X_{\inc(P),q}$ is part of a more general conjecture of Haiman \cite{HaimanHecke} for symmetric functions associated to $\wtc wq$ for any $w$ in the context of immanants. In \cite[\S 11.3]{abreu2022geometric}, Abreu and Nigro used the plethystic relation to define analogs of unicellular LLT polynomials for all permutations. 

The coefficients of various expansions of LLT polynomials can also be viewed as evaluations of traces at $\{\widetilde{C}_w(q) \mid w \text{ avoiding } 312\}$. 
\[
\LLT_{\inc(P),q} = \sum_{\lambda} \epsilon^{\lambda}_{q,LLT}(\widetilde{C}_w(q)) m_{\lambda},
\]
where $\epsilon^{\lambda}_{q,LLT}(\widetilde{C}_w(q))$ is a certain LLT-analog of the trace $\epsilon^{\lambda}_q$. 

We describe similar analogs of  induced trivial characters $\eta^{\lambda}_{q,\LLT}$
and power sum traces $\psi^{\lambda}_{q,\LLT}$. The evaluations at $\widetilde{C}_w(q)$ were known as expansions of the LLT but now we obtain evaluations at the natural basis $T_w$, which were not known before.

We also give change of basis equations between $\psi^{n}_{q,\LLT}$, $\epsilon^n_{q,\LLT}$, and $\eta^n_{q,\LLT}$ and known traces that resemble the Cauchy identity of symmetric functions after a principal specialization.

\section{Background}\label{sec: background}

\noindent{\bf Symmetric functions.} We mostly use the notation from \cite[Ch. 7]{StanEC2}. We denote by $\Lambda_n$ the ring of symmetric functions of degree $n$ and $m_{\lambda}$, $e_{\lambda}$, $h_{\lambda}$, $p_{\lambda}$, $s_{\lambda}$, $f_{\lambda}$ denote the {\em monomial}, {\em elementary}, {\em complete}, {\em power sum}, {\em Schur}, and {\em forgotten} symmetric functions. Also $\omega$ denotes the standard involution in $\Lambda_n$.   

\noindent{\bf Hecke algebra and traces.} The Hecke algebra $\hnq$ is a noncommutative
$\zqq$-algebra
generated by {\em natural generators}
$\{ T_{s_i} \,|\, 1 \leq i \leq n-1 \}$ 
subject to the relations
\begin{alignat*}{2}
T_{s_i}^2 &= (q-1) T_{s_i} + q,          
&\qquad &\text{for } i=1,\dotsc,n-1, \\
T_{s_i} T_{s_j} T_{s_i} &= T_{s_j} T_{s_i} T_{s_j}, 
&\qquad &\text{if }  |i-j|=1,\\
T_{s_i} T_{s_j} &= T_{s_j} T_{s_i},         
&\qquad &\text{if }  |i-j| \geq 2.
\end{alignat*}
Specializing $\hnq$ at $\qp 12 = 1$, we obtain the classical group algebra $\zsn$ of the symmetric group.

Let $\trspace{\hnq}$ be the $\zqq$-module of
$\hnq$-{\em traces}, linear functionals
$\theta_q: \hnq \rightarrow \zqq$ satisfying
$\theta_q(DD') = \theta_q(D'D)$ for all $D, D' \in \hnq$.
For any trace $\theta_q: T_w \mapsto a(q)$ in $\trspace{\hnq}$,
the $\qp12 =1$ specialization $\theta: w \mapsto a(1)$ belongs to the space $\trspace{\sn} \defeq \trspace{H_n(1)}$ of $\zsn$-traces
from $\zsn \rightarrow \mathbb Z$ ($\sn$-class functions). Like the $\mathbb Z$-module $\Lambda_n$ of homogeneous degree-$n$ symmetric functions, the trace spaces $\trspace{\hnq}$ and $\trspace{\sn}$ have
dimension equal to the number of {\em integer partitions} of $n$,
the weakly decreasing positive integer sequences
$\lambda = (\lambda_1,\dotsc,\lambda_r)$ satisfying
$\lambda_1 + \cdots + \lambda_r = n$.

It can be useful to record trace evaluations
in a symmetric generating function. In particular, for $D \in \mathbb Q(q) \otimes \hnq$,
we record induced sign character evaluations by defining
\begin{equation}\label{eq:ygdef}
  Y_q(D) \defeq \sum_{\lambda \vdash n} \epsilon_q^\lambda(D) m_\lambda
  \in \mathbb{Q}(q) \otimes \Lambda_n.
\end{equation}
This symmetric generating function in fact gives us all of the standard trace evaluations.

\begin{prop}\label{p:Yexpand}
  The symmetric function $Y_q(D)$ is equal to
  \begin{equation*}\label{eq:sumeta}
      \sum_{\lambda \vdash n} \eta_q^\lambda(D)f_\lambda
    = \sum_{\lambda \vdash n}\frac{sgn(\lambda)\psi_q^\lambda(D)}{z_\lambda}p_\lambda
    = \sum_{\lambda \vdash n} \chi_q^{\lambda^\tr}(D)s_\lambda
    = \sum_{\lambda \vdash n} \phi_q^\lambda(D)e_\lambda
    = \sum_{\lambda \vdash n} \gamma_q^\lambda(D)h_\lambda,
  \end{equation*}
where $sgn(\lambda):=(-1)^{n-\ell(\lambda)}$; equivalently, $\omega Y_q(D)$ is equal to  
\begin{equation*}\label{eq:sumepsilon}
    \sum_{\lambda \vdash n} \epsilon_q^\lambda(D)f_\lambda
  = \sum_{\lambda \vdash n} \eta_q^\lambda(D)m_\lambda
  = \sum_{\lambda \vdash n}\frac{\psi_q^\lambda(D)}{z_\lambda}p_\lambda
  = \sum_{\lambda \vdash n} \chi_q^\lambda(D)s_\lambda
  = \sum_{\lambda \vdash n} \phi_q^\lambda(D)h_\lambda
  = \sum_{\lambda \vdash n} \ntnsp \gamma_q^\lambda(D)e_\lambda.\nTksp
\end{equation*}
\end{prop}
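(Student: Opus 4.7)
The plan is to prove the proposition by realizing $\omega Y_q(D)$ as the symmetric function representing the ``evaluation at $D$'' functional on $\trspace{\hnq}$ via the Hall inner product composed with the Hecke Frobenius characteristic, and then reading off its coefficients in each standard basis of $\Lambda_n$.

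First I would invoke the Hecke Frobenius characteristic, a $\zqq$-linear isomorphism $\mathrm{ch}_q: \trspace{\hnq} \to \zqq \otimes \Lambda_n$ whose defining property is that it sends the named trace bases to prescribed symmetric function bases: $\chi_q^\lambda \mapsto s_\lambda$, $\eta_q^\lambda \mapsto h_\lambda$, $\epsilon_q^\lambda \mapsto e_\lambda$, $\psi_q^\lambda \mapsto p_\lambda$, $\phi_q^\lambda \mapsto m_\lambda$, and $\gamma_q^\lambda \mapsto f_\lambda$. (The first four are standard; the last two may be regarded as definitional, given the more familiar characters.) The relevant Hall dualities are that $\{s_\lambda\}$ is self-dual, $\{h_\lambda\}$ is dual to $\{m_\lambda\}$, $\{e_\lambda\}$ is dual to $\{f_\lambda\}$, and $\{p_\lambda\}$ is dual to $\{p_\lambda/z_\lambda\}$.

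Next, for each $D \in \hnq$ the linear functional $\theta_q \mapsto \theta_q(D)$ on $\trspace{\hnq}$ corresponds, under $\mathrm{ch}_q$ together with Hall self-duality of $\zqq \otimes \Lambda_n$, to a unique symmetric function $g_D$ satisfying $\theta_q(D) = \langle \mathrm{ch}_q(\theta_q), g_D \rangle$ for all $\theta_q$. Reading off the coefficient of each basis element of $g_D$ via its Hall-dual partner yields the six expressions in the $\omega Y_q(D)$ display directly: for example, the coefficient of $m_\lambda$ in $g_D$ equals $\langle h_\lambda, g_D \rangle = \eta_q^\lambda(D)$, giving $g_D = \sum_\lambda \eta_q^\lambda(D) m_\lambda$; the coefficient of $p_\lambda$ equals $\langle p_\lambda/z_\lambda, g_D \rangle = \psi_q^\lambda(D)/z_\lambda$; the coefficient of $h_\lambda$ equals $\langle m_\lambda, g_D \rangle = \phi_q^\lambda(D)$; and analogously for the remaining bases. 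This shows $\omega Y_q(D) = g_D$ and establishes the second display.

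Finally, I would derive the first display by applying $\omega$ to the second and using the classical identities $\omega m_\lambda = f_\lambda$, $\omega e_\lambda = h_\lambda$, $\omega s_\lambda = s_{\lambda^\tr}$, and $\omega p_\lambda = \mathrm{sgn}(\lambda) p_\lambda$; these transform the six terms of $\omega Y_q(D)$ into precisely the six terms of $Y_q(D)$, with $\chi_q^\lambda(D) s_\lambda$ reindexing to $\chi_q^{\lambda^\tr}(D) s_\lambda$ and the $p_\lambda$-coefficient picking up the sign $\mathrm{sgn}(\lambda)$. The main obstacle is justifying the Hecke Frobenius characteristic with the stated basis-correspondences, and in particular pinning down the definitions of $\phi_q^\lambda$ and $\gamma_q^\lambda$ in the paper's setup; once those are confirmed to match the standard Hecke-algebraic constructions (via induced characters from parabolic Hecke subalgebras $H_\lambda$, with appropriate dualities), the rest of the argument is a routine unraveling of Hall duality.
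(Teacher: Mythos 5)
Your argument is correct, and since the paper states this proposition as background without supplying a proof, the Hall-duality/Frobenius-characteristic argument you give is exactly the standard one underlying it: the trace bases $\epsilon_q^\lambda, \eta_q^\lambda, \chi_q^\lambda, \psi_q^\lambda, \phi_q^\lambda, \gamma_q^\lambda$ are defined (in the references the authors draw on) precisely so that the quantum Frobenius map sends them to $e_\lambda, h_\lambda, s_\lambda, p_\lambda, m_\lambda, f_\lambda$, after which your dual-basis bookkeeping and the final application of $\omega$ give both displays. The only point to make explicit is the one you already flag, namely that $\psi_q^\lambda$, $\phi_q^\lambda$, $\gamma_q^\lambda$ are defined by the inverse character-table and Kostka-type relations that make those Frobenius images hold by fiat, so no further content is hidden there.
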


\noindent {\bf Quantum matrix bialgebra and immanants.}
An important computational tool in the evaluation of $\hnq$-traces is the {\em quantum matrix bialgebra} $\anq$, the noncommutative
ring generated as a $\zqq$-algebra by the $n^2$ variables
$t = (t_{i,j})_{i,j \in [n]}$ subject to relations
\begin{equation}\label{eq:qringrelations}
\begin{alignedat}{2}
t_{i,\ell}t_{i,k} &= \smash{\qp12}t_{i,k} t_{i,\ell}, &\qquad
t_{j,k} t_{i,\ell} &= t_{i,\ell}t_{j,k}\\
t_{j,k}t_{i,k} &= \smash{\qp12}t_{i,k} t_{j,k} &\qquad
t_{j,\ell} t_{i,k} &= t_{i,k} t_{j,\ell} + \smash{(\qdiff)} t_{i,\ell}t_{j,k},
\end{alignedat}
\end{equation}
for all indices $1 \leq i < j \leq n$ and $1 \leq k < \ell \leq n$.
As a $\smash{\zqq}$-module, $\anq$ has a natural basis of monomials 
$t_{\ell_1,m_1} \cdots t_{\ell_r,m_r}$ in which index pairs
appear in lexicographic order. The relations (\ref{eq:qringrelations}) 
allow one to express other monomials in terms of this natural basis.

To state immanant generating functions for
$\hnq$-traces,
it will be convenient to express monomials in $\anq$ as follows.
Given $u = u_1 \cdots u_n$, $v = v_1 \cdots v_n \in \sn$, define
\begin{equation*}
  t^{u,v} \defeq t_{u_1,v_1} \cdots t_{u_n,v_n}.
\end{equation*}
For any linear function $\theta_q: \hnq \rightarrow \zqq$,
define the $\theta_q$-immanant in $\anq$ to be
\begin{equation*}
  \imm{\theta_q}(t) 
  = \sum_{w \in \sn} \qm{\ell(w)}2 \theta_q(T_w) t^{e,w}.
\end{equation*}

\begin{prop} \label{conj:product traces}
  Given Hecke algebra traces
  \begin{equation*}
    \theta_1 \in \trspace{H_k(q)},\quad
    \theta_2 \in \trspace{H_{n-k}(q)},\quad
    \theta = (\theta_1 \otimes \theta_2) \upparrow_{H_k(q) \times H_{n-k}(q)}^{\hnq}
    \in \trspace{\hnq},
  \end{equation*}
  we have
  \begin{equation*}
    \imm{\theta}(t) = \sumsb{I\text{ where }|I|=k}\,
    \imm{\theta_1}(t_{I,I}) \imm{\theta_2}(t_{\overline{I},\overline{I}}).
  \end{equation*}
\end{prop}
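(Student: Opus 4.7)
The plan is to verify the identity as an equality in $\anq$ by expanding both sides in the natural $\zqq$-basis $\{t^{e,w} \mid w \in \sn\}$ and matching coefficients of $t^{e,w}$. From the definition of $\imm{\theta}(t)$, the left-hand coefficient is $\qm{\ell(w)}{2}\theta(T_w)$. On the right, expanding each submatrix immanant gives
\[
\sumsb{|I|=k \\ u \in \mfs{k},\, v \in \mfs{n-k}} \qm{\ell(u)+\ell(v)}{2}\,\theta_1(T_u)\,\theta_2(T_v)\cdot t_{I,I}^{e,u}\,t_{\overline{I},\overline{I}}^{e,v},
\]
where, identifying $\mfs{k}$ and $\mfs{n-k}$ with permutations of $I = \{i_1 < \cdots < i_k\}$ and $\overline{I} = \{i_{k+1} < \cdots < i_n\}$ via these increasing orderings, $t_{I,I}^{e,u} = t_{i_1,i_{u(1)}}\cdots t_{i_k,i_{u(k)}}$ and similarly for $t_{\overline{I},\overline{I}}^{e,v}$.

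The central step is to rewrite each product $t_{I,I}^{e,u}\,t_{\overline{I},\overline{I}}^{e,v}$ in the natural basis. Its row indices appear in the order ``all of $I$, then all of $\overline{I}$,'' which is the natural order precisely when $I = [k]$. Every $w \in \sn$ admits a unique factorization $w = x_J\sigma$ with $J = w([k])$, $\sigma \in \mfs{k}\times\mfs{n-k}$, and length-additivity $\ell(w) = \ell(x_J) + \ell(\sigma)$, where $x_J$ is the minimal-length coset representative with $x_J([k]) = J$; hence $T_w = T_{x_J}T_\sigma$. I would then invoke the standard parabolic induction formula for $\hnq$, which writes $\theta(T_w)$ as a Bruhat-weighted combination of products $\theta_1(T_y)\theta_2(T_z)$, the $q$-analog of the Frobenius/Mackey formula for class functions.

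The main obstacle is the quantum commutation
\[
t_{j,\ell}t_{i,k} = t_{i,k}t_{j,\ell} + (\qdiff)\,t_{i,\ell}t_{j,k} \qquad (i<j,\ k<\ell),
\]
whose correction term $(\qdiff)\,t_{i,\ell}t_{j,k}$ breaks the original $(I,\overline{I})$ column-partition: the coefficient of a single $t^{e,w}$ on the right-hand side therefore receives contributions from many subsets $I$. The heart of the argument is to collect these contributions and verify that their total equals $\qm{\ell(w)}{2}\theta(T_w)$. I would proceed by induction on $\ell(x_J)$: the base case $J=[k]$ is immediate, since the product is already in natural order and reduces to the trivial identity; the inductive step matches a single quantum commutation against the corresponding braid-type relation for $T_{s_i}$ in $\hnq$, using length-additivity and the trace property of $\theta_1\otimes\theta_2$ to absorb each correction into the appropriate neighboring subset.
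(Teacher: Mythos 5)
The paper states this proposition without proof, so your proposal can only be judged on its own terms. Your setup is correct: the coefficient of $t^{e,w}$ on the left is $q^{-\ell(w)/2}\theta(T_w)$, and the real content is that straightening each $t_{I,I}^{e,u}\,t_{\overline I,\overline I}^{e,v}$ into the natural basis makes the coefficient of a fixed $t^{e,w}$ a sum over many subsets $I$. But the write-up stops exactly where the proof has to begin. The ``standard parabolic induction formula'' that is supposed to carry the argument is never stated; for Hecke algebras it is not a simple Bruhat-weighted sum but rather $\theta(T_w)=\sum_{x}(\theta_1\otimes\theta_2)(p_{x,x}(T_w))$, where $p_{x,x}(T_w)$ is the diagonal component of $T_xT_w$ in the decomposition of $\hnq$ as a free right module over $H_k(q)\times H_{n-k}(q)$ with basis the minimal coset representatives $T_x$. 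Matching the straightening coefficients against that formula is the entire proof, and it is asserted rather than performed.

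Moreover, the induction you propose is set up incorrectly. Take $n=2$, $k=1$, $\theta_i(1)=c_i$. The target $w=e$ has $J=[k]$, yet the right-hand side assigns $t_{1,1}t_{2,2}$ the coefficient $2c_1c_2$: one $c_1c_2$ from $I=\{1\}$ and another from straightening $t_{2,2}t_{1,1}=t_{1,1}t_{2,2}+(q^{1/2}-q^{-1/2})t_{1,2}t_{2,1}$ in the $I=\{2\}$ term; and indeed $\theta(T_e)=2c_1c_2\neq\theta_1(T_e)\theta_2(T_e)$. So even when $w$ lies in the parabolic subgroup the claim is not ``the trivial identity'': every subset $I$ contributes to the coefficient of $t^{e,w}$, and the induced trace value is itself a nontrivial sum. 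An induction on $\ell(x_J)$, with $J$ determined by the target $w$, therefore cannot isolate one quantum commutation per step. A more workable route, consistent with the machinery the paper assembles (Lemma 2.3 and the Geck--Pfeiffer citation), is to show that the functional $T_w\mapsto q^{\ell(w)/2}\,[t^{e,w}]$ applied to the right-hand side is itself a trace, and then verify agreement with $\theta$ only on $T_w$ for $w$ of minimal length in its conjugacy class, where both sides become computable.
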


Since Hecke algebra traces are determined by the values on minimum length representatives (see \cite[Cor. 8.2.6]{GPCharHecke}), then the following result will be useful.

\begin{lem} \label{lem: min reps are smooth}
    Let $w \in \sn$ be of minimum length in its conjugacy class, then 
    $w$ \avoidsp. 
    Furthermore, each 
    $v\leq w$ also \avoidsp,
    and also is of minimum length in its conjugacy class.
\end{lem}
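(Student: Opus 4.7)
The plan is to combine the classical structural description of minimum-length conjugacy class representatives in type $A$ with a parabolic decomposition of the Bruhat interval $[e,w]$. The description (see, e.g., \cite[Ch.\ 8]{GPCharHecke}) asserts that any minimum-length element $w$ of its conjugacy class in $\sn$ admits a factorization $w = w_1 w_2 \cdots w_r$ in which $[n]$ is partitioned into consecutive intervals $I_1 < I_2 < \cdots < I_r$ and each $w_j$ is a Coxeter element of the parabolic subgroup $\mathfrak{S}_{I_j}$, that is, an $|I_j|$-cycle of length $|I_j|-1$.

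For the first claim, observe that such a $w$ preserves each interval $I_j$ setwise, so for positions $i \in I_j$ and $i' \in I_{j'}$ with $j < j'$ we have $w(i) < w(i')$. Any occurrence of $3412$ or $4231$ in $w$ must therefore lie entirely within a single block, since values at positions in earlier blocks are strictly smaller than values in later blocks, which is incompatible with the inversions demanded at the extreme positions of either pattern. The problem then reduces to showing that every Coxeter element $c$ of $\mathfrak{S}_k$ \avoidsp. I would prove this by induction on $k$: $c$ has only $k-1$ inversions, whereas an alleged occurrence of one of these patterns contributes $4$ or $5$ forced inversions among four positions, and each of the $k-4$ remaining positions must lie in at least one inversion (otherwise it would be a fixed point, contradicting that $c$ is a single $k$-cycle). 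This inversion bookkeeping, reinforced by the absence of any proper consecutive $c$-invariant subinterval of $[k]$, is the main technical obstacle.

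For the second claim, I would invoke the standard fact that $w \in \prod_j \mathfrak{S}_{I_j}$ (a product of commuting parabolic subgroups) implies $[e,w] = [e,w_1] \times \cdots \times [e,w_r]$ in Bruhat order. Every $v \leq w$ thus factors uniquely as $v = v_1 v_2 \cdots v_r$ with $v_j \leq w_j$ in $\mathfrak{S}_{I_j}$. Since $w_j$ has a reduced expression using each simple reflection of $\mathfrak{S}_{I_j}$ exactly once, the subword characterization of Bruhat order forces $v_j$ to have a reduced expression using a subset $S_j$ of those reflections, each at most once. The maximal consecutive runs of $S_j$ then partition $I_j$ into subintervals on each of which $v_j$ acts as a Coxeter element, the remaining points of $I_j$ being fixed. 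Assembling across $j$, the element $v$ is itself a product of Coxeter elements on an interval partition of $[n]$ refining that of $w$; its length equals $n$ minus the total number of cycles of $v$, which is the minimum possible for its cycle type. Hence $v$ is of minimum length in its conjugacy class, and by the block argument above, $v$ also \avoidsp.
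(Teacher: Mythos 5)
The paper states this lemma without proof, so there is nothing to compare line by line; judging your argument on its own terms, the architecture is the standard and correct one. Minimal-length elements of the class of cycle type $\lambda$ are exactly the products of Coxeter elements of parabolic subgroups $\mathfrak{S}_{I_1}\times\cdots\times\mathfrak{S}_{I_r}$ attached to a decomposition of $[n]$ into consecutive intervals (equivalently, the \emph{boolean} permutations, those with a square-free reduced word); both $3412$ and $4231$ invert their first and last entries, so any occurrence is confined to a single block; and the factorization $[e,w]=[e,w_1]\times\cdots\times[e,w_r]$ plus the subword property shows every $v\le w$ is again a product of Coxeter elements on a refinement of the interval decomposition, hence has $\ell(v)=n-c(v)$ and is minimal in its class. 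All of that is fine.

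The genuine gap is the step you yourself flag as the ``main technical obstacle'': that a Coxeter element $c$ of $\mathfrak{S}_k$ avoids $3412$ and $4231$. The inversion bookkeeping you propose does not close. A $3412$ occurrence forces $4$ inversions among the four pattern positions, and each remaining position lies in at least one inversion (that part is right: a position in no inversion is a fixed point), but distinct non-pattern positions may share a single inversion, so you only get $\ell(c)\ge 4+\lceil (k-4)/2\rceil$, which fails to exceed $k-1$ once $k\ge 6$; even the sharper observation that every position strictly between the first and last pattern positions is inverted with one of them only raises the bound to $(k+4)/2$. The appeal to ``absence of invariant subintervals'' is exactly what would be needed to control the positions outside the pattern's span, and it is not made precise. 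The clean repair avoids inversion counting entirely: $c$ has a reduced word with all letters distinct, so (braid moves being inapplicable to square-free words) every reduced word of $c$ is square-free; hence $c$ is fully commutative and, by Billey--Jockusch--Stanley, avoids $321$, which disposes of $4231$ since $4231$ contains $321$; and boolean permutations avoid $3412$ by Tenner's characterization of boolean elements as the $\{321,3412\}$-avoiders (or by a short direct argument that a $3412$ occurrence forces a repeated letter). Without some such argument the first assertion of the lemma, and with it the pattern-avoidance half of the second assertion, is unproved.
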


\section{Plethystically defined characters}\label{sec: plethystic characters}

Suppose that a certain plethystic substitution transforms
symmetric functions written $\{ Y_q(D) \,|\, D \in \hnq \}$
into symmetric functions $\{ Z_q(D) \,|\, D \in \hnq \}$, i.e. 
\begin{equation}\label{eq:ratlpleth}
Z_q(D) \defeq r(q) Y_q(D)[s(q)X]
\end{equation}
for some rational functions $r(q)$ and $s(q)$.
This substitution yields a transformation of $\hnq$ traces
\(
  \theta_q \mapsto \theta_{q,Z}
\)
as well.  Namely, we define $\epsilon_{q,Z}^\lambda$
to be the $\hnq$-trace that maps $D$ to the coefficient of $m_\lambda$
in the monomial expansion of $Z_q(D)$,
\begin{equation} \label{eq: definition epsilonqLLT}
  Z_q(D) = \sum_{\lambda \vdash n} \epsilon_{q,Z}^\lambda(D) m_\lambda.
\end{equation}
Then we extend linearly over $\zqq$, mapping
\begin{equation}\label{eq:thetaqz}
  \theta_q = \sum_{\lambda \vdash n} b_\lambda \epsilon_q^\lambda
  \quad \mapsto \quad
  \theta_{q,Z} \defeq \sum_{\lambda \vdash n} b_\lambda \epsilon_{q,Z}^\lambda.
\end{equation}

\begin{obs} \label{obs: LLT char different basis}
     The symmetric function $Z_q(D)$ is equal to
  \begin{equation*}
  \begin{aligned}
      \sum_{\lambda \vdash n} \eta_{q,Z}^\lambda(D)f_\lambda
    &= \sum_{\lambda \vdash n}\frac{sgn(\lambda)\psi_{q,Z}^\lambda(D)}{z_\lambda}p_\lambda
    = \sum_{\lambda \vdash n} \chi_{q,Z}^{\lambda^\tr}(D)s_\lambda= \sum_{\lambda \vdash n} \phi_{q,Z}^\lambda(D)e_\lambda
    = \sum_{\lambda \vdash n} \gamma_{q,Z}^\lambda(D)h_\lambda.
    \end{aligned}
  \end{equation*}
\end{obs}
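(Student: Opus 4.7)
The plan is to exploit the $\zqq$-linearity of the assignment $\theta_q \mapsto \theta_{q,Z}$ that is built into definition \eqref{eq:thetaqz}, combined with the identities of Proposition \ref{p:Yexpand}. Specifically, for each ``second'' basis $\{b_\mu\} \in \{\{f_\mu\}, \{p_\mu\}, \{s_\mu\}, \{e_\mu\}, \{h_\mu\}\}$ of $\Lambda_n$, I would use Proposition \ref{p:Yexpand} to equate coefficients of $b_\mu$ on both sides of the identity
\begin{equation*}
  \sum_{\lambda} \epsilon_q^\lambda(D)\, m_\lambda \;=\; \sum_{\mu} c_\mu\, \theta_q^\mu(D)\, b_\mu,
\end{equation*}
where $\theta_q^\mu \in \{\eta_q^\mu, \psi_q^\mu, \chi_q^{\mu^\tr}, \phi_q^\mu, \gamma_q^\mu\}$ is the corresponding trace family and $c_\mu$ is the trivial normalization (equal to $1$ except in the power-sum case, where $c_\mu = sgn(\mu)/z_\mu$). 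Since the change-of-basis matrix between $\{m_\lambda\}$ and $\{b_\mu\}$ in $\Lambda_n \otimes \mathbb Q$ is invertible, one obtains a rational expression of each $\theta_q^\mu$ as a $\mathbb Q$-linear combination of $\{\epsilon_q^\lambda\}_\lambda$ inside $\trspace{\hnq} \otimes \mathbb{Q}(q)$.

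Next, I would apply the linear map $\theta_q \mapsto \theta_{q,Z}$ to each of these identities. Since the map replaces every $\epsilon_q^\lambda$ by $\epsilon_{q,Z}^\lambda$ and extends $\mathbb{Q}(q)$-linearly, the same rational combination expresses $\theta_{q,Z}^\mu$ in terms of $\{\epsilon_{q,Z}^\lambda\}_\lambda$. Substituting this back into the candidate generating function $\sum_\mu c_\mu\, \theta_{q,Z}^\mu(D)\, b_\mu$, swapping the order of summation, and recognizing the inner sum as the expansion of $m_\lambda$ in the $\{b_\mu\}$ basis, the candidate collapses to $\sum_\lambda \epsilon_{q,Z}^\lambda(D)\, m_\lambda = Z_q(D)$ by \eqref{eq: definition epsilonqLLT}. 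Doing this for each of the five choices of $\{b_\mu\}$ establishes all five claimed identities.

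The main ``obstacle'' is essentially non-existent: the statement is a formal linearity consequence of Proposition \ref{p:Yexpand}, requiring no fresh combinatorial input. Informally, the proof records that since $Z_q$ is defined from $Y_q$ by a \emph{linear} (plethystic) operation in the symmetric function variable, all the basis-expansion relations among the traces survive the transformation. The substantive content of the paper appears later, where one computes the $q,Z$-traces $\eta_{q,Z}^\lambda, \psi_{q,Z}^\lambda, \chi_{q,Z}^\lambda$ explicitly on the Kazhdan--Lusztig basis $\widetilde{C}_w(q)$ and on the natural basis $T_w$.
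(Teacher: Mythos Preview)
Your proposal is correct and mirrors the paper's own justification: the paper treats this as an observation with no formal proof, remarking only that by \eqref{eq:thetaqz} the change-of-basis matrix relating two symmetric function bases (and hence the corresponding traces via Frobenius) also relates their $Z$-analogs. Your write-up simply spells out this linearity argument in detail.
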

\begin{prop}
For any plethystically defined map $Y \mapsto Z$
(\ref{eq:ratlpleth})
of symmetric functions,
if $\theta_q$ is a trace function, then so is $\theta_{q,Z}$.
\end{prop}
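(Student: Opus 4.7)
}
The plan is to track the trace (cyclicity) property through each linear operation involved in the definition of $\theta_{q,Z}$. First I would observe that, since each induced sign character $\epsilon_q^\lambda$ is itself a trace, the map $D \mapsto Y_q(D) = \sum_{\lambda \vdash n} \epsilon_q^\lambda(D) m_\lambda$ is a $\zqq$-linear map $\hnq \to \mathbb{Q}(q) \otimes \Lambda_n$ satisfying the cyclic identity $Y_q(DD') = Y_q(D'D)$ for all $D, D' \in \hnq$; this is immediate from the fact that the $m_\lambda$ are $\zqq$-linearly independent and each coordinate function is a trace.

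Next I would note that the plethystic substitution $f \mapsto f[s(q)X]$ is a $\mathbb{Q}(q)$-linear ring homomorphism on $\mathbb{Q}(q) \otimes \Lambda$, and multiplication by the scalar $r(q) \in \mathbb{Q}(q)$ is also $\mathbb{Q}(q)$-linear. Composing, the operator $f \mapsto r(q) f[s(q)X]$ is a $\mathbb{Q}(q)$-linear endomorphism of $\mathbb{Q}(q) \otimes \Lambda_n$. Therefore
\begin{equation*}
  Z_q(DD') = r(q)\, Y_q(DD')[s(q) X] = r(q)\, Y_q(D'D)[s(q) X] = Z_q(D'D),
\end{equation*}
so $Z_q : \hnq \to \mathbb{Q}(q) \otimes \Lambda_n$ is a $\zqq$-linear map that vanishes on commutators.

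Now I would use the monomial basis to extract the scalar traces. Since $\{m_\lambda \mid \lambda \vdash n\}$ is a $\mathbb{Q}(q)$-basis of $\mathbb{Q}(q) \otimes \Lambda_n$, the expansion~(\ref{eq: definition epsilonqLLT}) defines each $\epsilon_{q,Z}^\lambda$ as a $\mathbb{Q}(q)$-linear functional on $\hnq$, and the identity $Z_q(DD') = Z_q(D'D)$ forces $\epsilon_{q,Z}^\lambda(DD') = \epsilon_{q,Z}^\lambda(D'D)$ for every $\lambda \vdash n$. Hence every $\epsilon_{q,Z}^\lambda$ is a trace. Finally, since the $\epsilon_q^\lambda$ form a basis of the trace space, an arbitrary trace $\theta_q$ has a unique expansion $\theta_q = \sum_\lambda b_\lambda \epsilon_q^\lambda$, and its image $\theta_{q,Z} = \sum_\lambda b_\lambda \epsilon_{q,Z}^\lambda$ under~(\ref{eq:thetaqz}) is a $\mathbb{Q}(q)$-linear combination of traces, hence a trace itself.

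The argument is essentially formal and I do not expect any serious obstacle; the only place requiring a moment's care is confirming that the plethystic substitution $X \mapsto s(q)X$ is linear as an operator on $\Lambda_n$ (it is a ring homomorphism scaling each $p_k$ by $s(q)^k$, hence in particular linear), so that cyclicity passes through it coordinate-wise.
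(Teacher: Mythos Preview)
Your argument is correct and is precisely the natural formal verification; the paper itself states this proposition without proof, evidently treating it as immediate from the linearity of the plethystic operator together with the definition~(\ref{eq:thetaqz}), which is exactly what you unpack.

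One small inaccuracy in your final parenthetical remark: under the plethystic convention used here (and in Carlsson--Mellit), $p_k[s(q)X] = s(q^k)\,p_k$, not $s(q)^k\,p_k$, since plethysm substitutes $q \mapsto q^k$ inside the rational function rather than raising it to a power. This has no bearing on your proof, as the only property you actually use is that $f \mapsto r(q)\,f[s(q)X]$ is $\mathbb{Q}(q)$-linear on $\Lambda_n$, which holds regardless.
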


Furthermore by \eqref{eq:thetaqz}, 
the change of basis matrix which relates two symmetric function bases (and necessarily the traces which correspond by the Frobenius map), also relates the $Z$-analogs of those traces.

For example when $Z = \llt{\inc(P)}$ and $w = w(P)$ \avoidingp, we have
\begin{equation} \label{eq:epsilon for smooth}
\epsilon_{q,\LLT}^n(\wtc wq)=1.
\end{equation}
This is because by Prop.~\ref{prop: monomial expansion LLT} there is only one column-strict Young tableau $U$ of shape $1^n$, the tableau consisting one column with entries $1,2,\ldots,n$ in order,  with $\pinv(U)=0$ where $P=P(w)$. 

It is possible to
describe LLT analogs of 
induced sign characters, induced trivial characters, and power sum traces in terms of character induction.

\medskip 

\noindent {\bf LLT analogs of power sum traces} The LLT analogs of the power sum trace can be expressed simply in terms of the ordinary power sum trace. 
\begin{prop} \label{prop:relating power sum char and LLT power sum}
We have
\[
\psi_{q,\LLT}^{\lambda} = (q-1)^n \prod_{i} \frac{1}{q^{\lambda_i}-1} \cdot \psi_q^{\lambda}.
\]
\end{prop}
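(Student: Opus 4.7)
The plan is to pass through the power-sum expansions and directly apply the plethystic substitution used to define $Z = \LLT$. Rewriting the Carlsson--Mellit relation $X_{\inc(P),q}[X] = (q-1)^{-n}\LLT_{\inc(P),q}[(q-1)X]$ in the form (\ref{eq:ratlpleth}), I would identify the LLT plethystic transform as
\[
Z_q(D) \defeq (q-1)^n\, Y_q(D)\bigl[X/(q-1)\bigr],
\]
so that in the power-sum realization the substitution sends $p_k \mapsto p_k/(q^k - 1)$ for every $k \geq 1$.

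Next, I would invoke Proposition~\ref{p:Yexpand} to expand
\[
Y_q(D) = \sum_{\lambda \vdash n} \frac{sgn(\lambda)\, \psi_q^\lambda(D)}{z_\lambda}\, p_\lambda,
\]
and apply the plethystic substitution $p_{\lambda_i} \mapsto p_{\lambda_i}/(q^{\lambda_i}-1)$ termwise, obtaining
\[
Z_q(D) = \sum_{\lambda \vdash n} \frac{sgn(\lambda)}{z_\lambda}\left[(q-1)^n \prod_i \frac{1}{q^{\lambda_i}-1}\, \psi_q^\lambda(D)\right] p_\lambda.
\]
Then I would compare this with the power-sum expansion of $Z_q(D)$ supplied by Observation~\ref{obs: LLT char different basis},
\[
Z_q(D) = \sum_{\lambda \vdash n}\frac{sgn(\lambda)\,\psi_{q,\LLT}^\lambda(D)}{z_\lambda}\, p_\lambda,
\]
and equate the coefficients of $p_\lambda$. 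Since this equality of linear functionals holds for every $D \in \hnq$, the proposition follows.

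The computation is routine; the only point that requires any care is justifying that the plethystic substitution is well-defined on traces valued in $\mathbb{Q}(q)$ (so that the denominators $q^{\lambda_i}-1$ make sense), and that the identification $Z_q = \LLT$ promotes from the \pavoiding \ cases to all of $\hnq$. The first is automatic after passing to $\mathbb{Q}(q) \otimes \hnq$, and the second is built into the definition (\ref{eq:ratlpleth})--(\ref{eq:thetaqz}) of the plethystic transform of trace functions. Beyond that, there is no real obstacle: the argument is essentially a one-line plethystic calculation rewritten as an identity of traces.
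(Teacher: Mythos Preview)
Your argument is correct and is exactly the natural one: the plethystic substitution $X \mapsto X/(q-1)$ acts on power sums by $p_k \mapsto p_k/(q^k-1)$, so comparing the $p_\lambda$-expansions of $Y_q(D)$ and $Z_q(D)$ from Proposition~\ref{p:Yexpand} and Observation~\ref{obs: LLT char different basis} immediately gives the stated scalar. The paper omits this proof entirely, so there is nothing to compare against; your write-up is presumably what the authors had in mind, and the two ``care'' points you flag are indeed non-issues for the reasons you give.
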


\begin{proof}
Omitted.
\end{proof}

\noindent {\bf LLT analogs of induced sign characters and induced trivial characters}
The evaluations of $\epsilon_{q,\LLT}^\lambda$ and $\eta_{q,\LLT}^{\lambda}$ at $\{\widetilde{C}_w(q) \mid w \text{ avoiding } 312\}$ have simple combinatorial interpretations.
\begin{prop} \label{prop: monomial expansion LLT}
  Fix $w \in \sn$ avoiding the pattern $312$, and let $P = P(w)$.  For all $\lambda \vdash n$
  we have
  \begin{equation*}
    \epsilon_{q,\LLT}^\lambda (\wtc wq) = 
    \sum_U q^{\pinv(U)},
  \end{equation*}
  where the sum is over all column-strict Young tableaux $U$
  of shape $\lambda^\tr$, and 
  \begin{equation*}
    \eta_{q,\LLT}^\lambda(\wtc wq) = \sum_U q^{\pinv((U_1 \circ \cdots \circ U_r)^R)},
  \end{equation*}
  where the sum is over all row-strict Young tableaux $U$ 
  of shape $\lambda$ and $(U_1 \circ \cdots \circ U_r)^R$ is the reversal of the concatenation of rows in $U$.
\end{prop}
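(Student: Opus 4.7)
The plan is to reduce both formulas to the monomial expansion of the unicellular LLT polynomial, using the plethystic definition of the trace analogs.

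First, I would unpack the definitions. By \eqref{eq: definition epsilonqLLT} and Observation~\ref{obs: LLT char different basis}, the value $\epsilon_{q,\LLT}^\lambda(\wtc wq)$ is the coefficient of $m_\lambda$ in $Z_q(\wtc wq)$, and $\eta_{q,\LLT}^\lambda(\wtc wq)$ is the coefficient of $f_\lambda$ in the same expression, equivalently the coefficient of $m_\lambda$ in $\omega Z_q(\wtc wq)$. Next, I would verify that when $w$ avoids $312$ and $P = P(w)$ is the associated unit interval order, one has $Z_q(\wtc wq) = \LLT_{\inc(P),q}$. This follows by applying the plethystic substitution defining $Y \mapsto Z$ to the known identity $Y_q(\wtc wq) = X_{\inc(P),q}$ from \cite{CHSSkanEKL}, together with the plethystic relation $X_{\inc(P),q}[X] = (q-1)^{-n}\LLT_{\inc(P),q}[(q-1)X]$ recalled in the introduction; these two substitutions compose to the identity on the LLT side.

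With this identification in place, I would extract the coefficient of $m_\lambda$ combinatorially from the sum-over-colorings definition
\[
\LLT_{\inc(P),q} \;=\; \sum_{\kappa \colon V(\inc(P))\to \mathbb N} q^{\asc(\kappa)} x_{\kappa(1)} x_{\kappa(2)} \cdots.
\]
Grouping colorings by their type (multiset of colors) is the standard way to pass from the $x$-monomial expansion to the $m_\lambda$-expansion. The coefficient of $m_\lambda$ is thus the generating function by $\asc$ of all colorings $\kappa$ of type $\lambda$. The bijection I would invoke encodes such a $\kappa$ as a column-strict Young tableau $U$ of shape $\lambda^\tr$: each column of $U$ records, in the order prescribed by $P$, the vertices assigned a given color, with the $i$-th column containing the $\lambda_i$ vertices of color~$i$. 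Column-strictness with respect to the $P$-order is then equivalent to the natural consistency requirement, and a direct check shows that $\asc(\kappa)$ goes to $\pinv(U)$ under this encoding, yielding the first formula.

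Finally, for the induced trivial analog, I would apply $\omega$ and repeat the same extraction. Since $\omega$ swaps elementary and complete bases (and correspondingly turns column-strict fillings into row-strict ones), the tableaux appearing in the $m_\lambda$-expansion of $\omega \LLT_{\inc(P),q}$ are row-strict of shape $\lambda$; the reading-word convention $(U_1 \circ \cdots \circ U_r)^R$ records the $P$-order of vertices after the $\omega$-transposition, so that $\asc$ translates into $\pinv$ evaluated at this word. The main obstacle is the combinatorial bijection in the third paragraph: one has to check carefully, using the hypothesis that $w$ avoids $312$ (so $\inc(P)$ is a unit interval graph and vertices admit a linear $P$-compatible ordering), that $\asc_P(\kappa)$ is transported exactly to $\pinv(U)$, and that the analogous reading-word statistic emerges on the row-strict side after applying $\omega$. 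Once this bijection is established, both equalities follow at once from the coefficient extractions in the first paragraph.
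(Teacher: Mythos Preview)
The paper omits its proof of this proposition, so there is no line-by-line comparison to make. Your overall strategy---identify $Z_q(\wtc wq)$ with $\LLT_{\inc(P),q}$ via the plethystic link to $Y_q(\wtc wq)=X_{\inc(P),q}$, then read off the $m_\lambda$- and $f_\lambda$-coefficients combinatorially---is the natural one, and the $\epsilon_{q,\LLT}^\lambda$ half goes through essentially as you describe: the coefficient of $m_\lambda$ in $\LLT_{\inc(P),q}$ is the $q^{\asc}$-generating function over colorings with color multiset $1^{\lambda_1}\cdots r^{\lambda_r}$, and such a coloring is exactly the data of a column-strict filling of $\lambda^\tr$ whose $j$th column lists the vertices colored $j$; checking $\asc(\kappa)=\pinv(U)$ is a direct translation of the definitions once one uses that $P$ is a unit interval order.

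The $\eta_{q,\LLT}^\lambda$ half, however, has a real gap. Your sentence ``since $\omega$ swaps elementary and complete bases (and correspondingly turns column-strict fillings into row-strict ones)'' is doing too much work: $\omega$ acts on the symmetric-function side, not directly on the coloring model, and there is no general principle that applying $\omega$ to a positive $m$-expansion produces another positive $m$-expansion indexed by the ``transposed'' combinatorial objects with a tweaked statistic. To extract the $f_\lambda$-coefficient you genuinely need a combinatorial description of $\omega\,\LLT_{\inc(P),q}$, and for unicellular LLT polynomials this is a separate (known) identity---for instance via the symmetry that reverses the linear order on $[n]$ and relates $\asc$ to an inversion-type statistic on the reversed word. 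That is exactly what produces the reading word $(U_1\circ\cdots\circ U_r)^R$ and explains why $\pinv$ is evaluated on it rather than on $U$ itself. You should state and invoke that $\omega$-identity explicitly (or derive the $f_\lambda$-coefficient by an independent argument, e.g.\ via the quasisymmetric expansion and the involution on $P$-tableaux); as written, the passage from the $\epsilon$ formula to the $\eta$ formula is asserted rather than proved.
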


\begin{proof}
Omitted.
\end{proof}

 This leads to the following generating functions for $\epsilon_{q,\LLT}^{\lambda}$ and $\eta_{q,\LLT}^{\lambda}$.
\begin{thm}\label{t:epsilonllt}
  For $\lambda = (\lambda_1,\dotsc,\lambda_r) \vdash n$ we have
  \begin{equation}\label{eq:epsilonllt}
    \imm{\epsilon_{q,\LLT}^{\lambda}}(t) =
    \sum_{(I_1,\dotsc,I_r)} (t_{I_1,I_1})^{e,e} \cdots (t_{I_r,I_r})^{e,e},
  \end{equation}
  where the sum is over all ordered set partitions $(I_1,\ldots,I_r)$ of type $\lambda$,
  and $e$ is the identity permutation in the appropriate subgroup of $\sn$. And 

  \begin{equation}\label{eq:etallt}
    \imm{\eta_{q,\LLT}^{\lambda}}(t) = \sum_{(I_1,\dotsc,I_r)}
    (t_{I_r,I_r})^{w_0,w_0} \cdots (t_{I_1,I_1})^{w_0,w_0},
  \end{equation}
  where the sum is over all ordered set partitions of type $\lambda$
  and $w_0$ is the longest permutation in the appropriate subgroup of $\sn$.
\end{thm}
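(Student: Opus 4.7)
The plan is to reduce both formulas to single-part base cases via an induction-type decomposition enabled by Proposition~\ref{conj:product traces}. The key step is to establish that $\epsilon_{q,\LLT}^\lambda$ and $\eta_{q,\LLT}^\lambda$ inherit the induced-character structure of their classical counterparts:
\[
\epsilon_{q,\LLT}^\lambda = (\epsilon_{q,\LLT}^{\lambda_1} \otimes \cdots \otimes \epsilon_{q,\LLT}^{\lambda_r}) \uparrow_{H_{\lambda_1}(q) \times \cdots \times H_{\lambda_r}(q)}^{\hnq},
\]
and analogously for $\eta_{q,\LLT}^\lambda$. This follows from the classical induction identities combined with the fact that the plethystic transformation in~\eqref{eq:ratlpleth} defining $Z$ is compatible with induction: the prefactor $r(q) = (q-1)^{\deg}$ is multiplicative across degree, and plethysm distributes over products of symmetric functions, so under the Frobenius correspondence (where induction corresponds to multiplication of symmetric functions) the induced-trace relation carries over to the $Z$-analogs.

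With this induced structure in hand, iterating Proposition~\ref{conj:product traces} yields
\[
\imm{\epsilon_{q,\LLT}^\lambda}(t) = \sum_{(I_1,\dotsc,I_r)} \imm{\epsilon_{q,\LLT}^{\lambda_1}}(t_{I_1,I_1}) \cdots \imm{\epsilon_{q,\LLT}^{\lambda_r}}(t_{I_r,I_r}),
\]
summed over ordered set partitions of $[n]$ of type $\lambda$, with an analogous identity for $\eta$. The reversed product order in the $\eta$-statement produces the same sum as the natural order, since the induced trace is a class function that is independent of the chosen ordering of tensor factors in the parabolic subalgebra.

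The remaining step, and the main obstacle, is to verify the single-part base cases: $\imm{\epsilon_{q,\LLT}^{(k)}}(t) = (t_{[k],[k]})^{e,e} = t_{1,1}\cdots t_{k,k}$ and $\imm{\eta_{q,\LLT}^{(k)}}(t) = (t_{[k],[k]})^{w_0,w_0} = t_{k,k}\cdots t_{1,1}$. The $\epsilon$ base case is equivalent to the trace-value identity $\epsilon_{q,\LLT}^{(k)}(T_w) = \delta_{w,e}$. Using the identity $m_{(k)} = p_{(k)}$ together with Observation~\ref{obs: LLT char different basis}, we obtain the expansion
\[
\epsilon_{q,\LLT}^{(k)} = \sum_{\mu \vdash k} \frac{sgn(\mu)}{z_\mu}\, \psi_{q,\LLT}^\mu,
\]
and Proposition~\ref{prop:relating power sum char and LLT power sum} rewrites this in terms of classical power sum traces $\psi_q^\mu$. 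The required identity for $w \neq e$ then follows from a cancellation argument using the known Hecke-algebra evaluations of $\psi_q^\mu(T_w)$ (e.g., via Ram's Frobenius-type character formula). The $\eta$ base case is handled symmetrically, either via the analogous expansion using the $f_\lambda$-portion of Observation~\ref{obs: LLT char different basis}, or by directly computing the natural-basis expansion of $t_{k,k}\cdots t_{1,1}$ via the quantum matrix relations~\eqref{eq:qringrelations} and matching coefficients against $q^{\ell(w)/2}\eta_{q,\LLT}^{(k)}(T_w)$.
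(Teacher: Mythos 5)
The paper gives no proof of this theorem (it is stated with proof omitted), so your proposal can only be measured against the surrounding logical structure. Your architecture effectively proves Theorem~\ref{thm: induced sign LLT} first and then deduces the present theorem by iterating Proposition~\ref{conj:product traces}; note this reverses the paper's order, since the paper derives Theorem~\ref{thm: induced sign LLT} \emph{from} this theorem. The soft parts of your plan are fine: because the plethysm acts diagonally on power sums with degreewise-multiplicative scalars (Proposition~\ref{prop:relating power sum char and LLT power sum}, with $r(q)=(q-1)^n$), the map $\theta_q\mapsto\theta_{q,\LLT}$ respects induction products, so $\epsilon_{q,\LLT}^\lambda$ and $\eta_{q,\LLT}^\lambda$ are induced from their one-part versions, and the reversed factor order in the $\eta$ formula is harmless since the induction product of traces is commutative.

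The genuine gap is the base cases, which is where all the content of the theorem lives. For $\epsilon$ you must show $\epsilon_{q,\LLT}^{n}(T_w)=\delta_{w,e}$; your reduction via $\epsilon_{q,\LLT}^{n}=\sum_{\mu\vdash n}\frac{\sgn(\mu)}{z_\mu}\psi_{q,\LLT}^{\mu}$ and Proposition~\ref{prop:relating power sum char and LLT power sum} turns this into the identity $\sum_{\mu\vdash n}\frac{\sgn(\mu)}{z_\mu}\frac{(q-1)^n}{\prod_i(q^{\mu_i}-1)}\psi_q^{\mu}(T_w)=\delta_{w,e}$, which is exactly the statement that this combination is the canonical symmetrizing trace of $\hnq$ --- essentially the Cauchy-type, principal-specialization expansions the paper records in Corollary~\ref{cor: power sum LLT in other bases} as a \emph{consequence} of this theorem, not an input. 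Saying it ``follows from a cancellation argument using Ram's formula'' does not discharge it: that cancellation is the nontrivial computation, and you neither carry it out nor cite the precise symmetrizing-trace fact, and you must also avoid circularity with Corollary~\ref{cor: power sum LLT in other bases}. The $\eta$ base case is worse: your second route is circular as stated, since you propose to match the natural-basis expansion of $t_{k,k}\cdots t_{1,1}$ (which indeed has coefficients $R_{e,w}(q)q^{-\ell(w)/2}$) against the values $\eta_{q,\LLT}^{k}(T_w)$, which are exactly what remains to be determined; and your first route hits the same unproven cancellation. A route more in the spirit of the paper's toolkit would pin down the one-part traces from Proposition~\ref{prop: monomial expansion LLT} and (\ref{eq:epsilon for smooth}) evaluated at Kazhdan--Lusztig elements $\wtc wq$ for minimal-length class representatives, using Lemma~\ref{lem: min reps are smooth} and triangularity to compare with the traces defined by the right-hand sides. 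One small further point: your expansion of $\epsilon_{q,\LLT}^{k}$ is correct, but the justification is not ``$m_{(k)}=p_{(k)}$''; what you need is that the coefficient of $m_{(k)}$ in every $p_\mu$, $\mu\vdash k$, equals $1$.
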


\begin{proof}
Omitted.
\end{proof}

Equivalently, we have the following.
\begin{thm}
\label{thm: induced sign LLT}
  We have
\begin{equation*}
    \epsilon_{q,\LLT}^\lambda = 
    (\epsilon_{q,\LLT}^{\lambda_1} \otimes \cdots \otimes \epsilon_{q,\LLT}^{\lambda_r})
    \upparrow_{H_{\lambda}(q)}^{\hnq}, \quad \text{where} \quad \epsilon^n_{q,\LLT}( T_w) = \begin{cases}
      1 & \text{if $w = e$},\\
      0 & \text{otherwise}.
    \end{cases}
  \end{equation*}

  And \begin{equation*}
    \eta_{q,\LLT}^\lambda = 
    (\eta_{q,\LLT}^{\lambda_1} \otimes \cdots \otimes \eta_{q,\LLT}^{\lambda_r})
    \upparrow_{H_{\lambda}(q)}^{\hnq}, \quad \text{where} \quad  \eta^n_{q,\LLT}( T_w) = 
    R_{e,w}(q).
  \end{equation*}
  \end{thm}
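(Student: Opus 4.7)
The plan is to reduce both statements to the single-block case $\lambda=(n)$, settle that case directly from the immanant formulas in Theorem~\ref{t:epsilonllt}, and then lift to arbitrary $\lambda$ by iterating the product-trace formula of Proposition~\ref{conj:product traces}, concluding by injectivity of the immanant map on $\trspace{\hnq}$.

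For the single-block case of the $\epsilon$-identity, Theorem~\ref{t:epsilonllt} specialized to $\lambda=(n)$ has a unique ordered set partition $([n])$ and so
\[
\imm{\epsilon^n_{q,\LLT}}(t)=(t_{[n],[n]})^{e,e}=t^{e,e}.
\]
Matching this against the definition $\imm{\theta_q}(t)=\sum_{w\in\sn}q^{-\ell(w)/2}\theta_q(T_w)\,t^{e,w}$ and using that $\{t^{e,w}:w\in\sn\}$ is part of the natural basis of $\anq$ forces $\epsilon^n_{q,\LLT}(T_w)=\delta_{w,e}$. The $\eta$-analogue in the same case reads
\[
\imm{\eta^n_{q,\LLT}}(t)=(t_{[n],[n]})^{w_0,w_0}=t_{n,n}t_{n-1,n-1}\cdots t_{1,1},
\]
which does not sit in the natural basis. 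To extract the trace values I would expand this monomial using the quantum relations~(\ref{eq:qringrelations}), in particular the relation $t_{j,j}t_{i,i}=t_{i,i}t_{j,j}+(\qdiff)\,t_{i,j}t_{j,i}$ for $i<j$, and show that
\[
t^{w_0,w_0}=\sum_{w\in\sn}q^{-\ell(w)/2}R_{e,w}(q)\,t^{e,w},
\]
which by the definition of the immanant immediately gives $\eta^n_{q,\LLT}(T_w)=R_{e,w}(q)$. This last identity is the main obstacle: I would prove it by induction along a reduced word for $w_0$, verifying that each elementary commutation produces exactly the two branches of the Kazhdan--Lusztig recursion for $R_{u,w}$ (the trivial branch when $s_iu<u$, and the $qR+(q{-}1)R$ branch otherwise), with the factor $\qdiff$ and the auxiliary powers of $q$ created when renormalising the off-diagonal monomials $t_{i,j}t_{j,i}$ back to lex order combining exactly as the recursion prescribes.

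With the $\lambda=(n)$ identities in hand, I would lift to general $\lambda=(\lambda_1,\ldots,\lambda_r)$ by iterating Proposition~\ref{conj:product traces} in $r-1$ stages: successively inducing from $H_{\lambda_1}(q)\times H_{n-\lambda_1}(q)$, then refining the second factor, and so on. This yields
\[
\imm{(\theta_1\otimes\cdots\otimes\theta_r)\upparrow_{H_\lambda(q)}^{\hnq}}(t)=\sum_{(I_1,\ldots,I_r)}\imm{\theta_1}(t_{I_1,I_1})\cdots\imm{\theta_r}(t_{I_r,I_r}),
\]
the sum running over ordered set partitions of $[n]$ of type $\lambda$. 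Plugging in $\theta_j=\epsilon^{\lambda_j}_{q,\LLT}$ (respectively $\theta_j=\eta^{\lambda_j}_{q,\LLT}$) and using the single-block immanant identities just established, the right-hand side becomes exactly the sum in~(\ref{eq:epsilonllt}) (respectively~(\ref{eq:etallt})), which by Theorem~\ref{t:epsilonllt} equals $\imm{\epsilon^\lambda_{q,\LLT}}(t)$ (respectively $\imm{\eta^\lambda_{q,\LLT}}(t)$). A final appeal to the injectivity of the immanant map --- again a consequence of the linear independence of $\{t^{e,w}\}$ --- forces equality of the traces themselves.
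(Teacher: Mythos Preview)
Your proposal is correct and follows essentially the same route as the paper: specialize Theorem~\ref{t:epsilonllt} to $\lambda=(n)$ to get $\imm{\epsilon^n_{q,\LLT}}(t)=t^{e,e}$ and $\imm{\eta^n_{q,\LLT}}(t)=t^{w_0,w_0}$, read off the trace values (using the identity $t^{w_0,w_0}=\sum_{w}q^{-\ell(w)/2}R_{e,w}(q)\,t^{e,w}$, which the paper simply asserts but which you outline an inductive proof of), and then invoke Proposition~\ref{conj:product traces} to obtain the induced-character descriptions for general $\lambda$. One cosmetic point: your iterated product formula places the factors in the order $I_1,\dots,I_r$, whereas~(\ref{eq:etallt}) is written in the reversed order $I_r,\dots,I_1$; these agree because the induced trace is independent of the ordering of the tensor factors, but you should say so rather than claim the expressions match ``exactly.''
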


\begin{proof}
By Theorem~\ref{t:epsilonllt} and Proposition~\ref{conj:product traces}, we have $\imm{\epsilon_{q,\LLT}^n}(t) = t_{1,1} \cdots t_{n,n}$.

Also, we have $t^{w_0,w_0}= \sum\limits_{w\in \sn} R_{e,w}(q)q^{-\frac{\ell(w)}{2}}t^{e,w}$, and $\eta_{q,\LLT}^{n}(\wT_w) = R_{e,w}(q)q^{-\frac{\ell(w)}{2}}$.
\end{proof}

We can express $\epsilon_{q,\LLT}^n$ and $\eta_{q,\LLT}^n$ in terms of ordinary $\hnq$-characters and principal specialization of symmetric functions.
\begin{cor} \label{cor: power sum LLT in other bases}
\begin{multline*}
\frac{\epsilon_{q,\LLT}^n}{(1-q)^n}  
\,=\, \sum_{\lambda} \frac{1}{z_{\lambda}} \prod_i \frac{1}{1-q^{\lambda_i}} \psi_q^{\lambda}
= \sum_{\lambda} \frac{q^{b(\lambda)}}{\prod_{u\in \lambda} (1-q^{h(u)})} \chi_q^{\lambda} \\ \,=\,  \sum_{\lambda} \prod_{i} \frac{1}{(1-q)(1-q^2)\cdots (1-q^{\lambda_i})} \phi_q^{\lambda}
=  \sum_{\lambda} \prod_{i} \frac{q^{\binom{\lambda_i}{2}}}{(1-q)(1-q^2)\cdots (1-q^{\lambda_i})} \gamma_q^{\lambda},
\end{multline*}
and 
\begin{multline*}
\frac{\eta_{q,\LLT}^n}{(1-q)^n} \,=\, \sum_{\lambda} \frac{\mathrm{sgn}(\lambda)}{z_{\lambda}} \prod_i \frac{1}{1-q^{\lambda_i}} \psi_q^{\lambda}\label{eq: eta LLT in terms of power sum traces} 
\,=\,  \sum_{\lambda} \frac{q^{b(\lambda')}}{\prod_{u\in \lambda'} (1-q^{h(u)})} \chi_q^{\lambda},\\
\,=\,  \sum_{\lambda} \prod_{i} \frac{q^{\binom{\lambda_i}{2}}}{(1-q)(1-q^2)\cdots (1-q^{\lambda_i})}\phi_q^{\lambda}
\,=\,    \sum_{\lambda} \prod_{i}  \frac{1}{(1-q)(1-q^2)\cdots (1-q^{\lambda_i})} \gamma_q^{\lambda},
\end{multline*}   
where $h(u)$ is the hook-length of $u$ and $b(\lambda)=\sum_i (i-1)\cdot \lambda_i = \sum_i \binom{\lambda'_i}{2}$.
\end{cor}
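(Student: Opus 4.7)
The plan is to first establish the leftmost ($\psi_q$) expansion in each line directly from Observation~\ref{obs: LLT char different basis} and Proposition~\ref{prop:relating power sum char and LLT power sum}, and then convert the resulting power-sum expansion into the remaining bases via a Frobenius-like identification together with the Cauchy and dual Cauchy identities specialized at $y = (1, q, q^2, \dotsc)$. For the first equality in the $\epsilon_{q,\LLT}^n$ line, I would equate coefficients of $m_{(n)}$ on both sides of
\[ Z_q(D) \;=\; \sum_{\lambda \vdash n} \epsilon_{q,\LLT}^\lambda(D)\, m_\lambda \;=\; \sum_{\mu \vdash n} \frac{sgn(\mu)\, \psi_{q,\LLT}^\mu(D)}{z_\mu}\, p_\mu. \]
The coefficient of $m_{(n)}$ in $p_\mu = \prod_i \sum_j x_j^{\mu_i}$ equals $1$ for every $\mu \vdash n$ (only the diagonal term $j_1=j_2=\cdots=1$ contributes to $x_1^n$). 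Plugging in Proposition~\ref{prop:relating power sum char and LLT power sum} and collecting signs via $sgn(\mu)(q-1)^n / \prod_i (q^{\mu_i}-1) = (1-q)^n / \prod_i (1-q^{\mu_i})$ yields the claimed $\psi_q$-expansion of $\epsilon_{q,\LLT}^n/(1-q)^n$. The analogous derivation for $\eta_{q,\LLT}^n$ applies $\omega$ termwise to Observation~\ref{obs: LLT char different basis} (using $\omega f_\lambda = m_\lambda$ and $\omega p_\mu = sgn(\mu) p_\mu$) and then extracts the coefficient of $m_{(n)}$ in the same way, leaving an extra factor of $sgn(\mu)$.

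For the remaining equalities, note that Proposition~\ref{p:Yexpand} implicitly provides a $\zqq$-linear isomorphism $\Phi \colon \trspace{\hnq} \otimes \mathbb{Q}(q) \to \Lambda_n \otimes \mathbb{Q}(q)$ with
\[ \Phi(\chi_q^\lambda) = s_\lambda, \quad \Phi(\psi_q^\mu) = p_\mu, \quad \Phi(\eta_q^\lambda) = h_\lambda, \quad \Phi(\epsilon_q^\lambda) = e_\lambda, \quad \Phi(\phi_q^\lambda) = m_\lambda, \quad \Phi(\gamma_q^\lambda) = f_\lambda, \]
because the change-of-basis matrix between any two trace bases must coincide with that between the corresponding symmetric function bases on the right. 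Applying $\Phi$ to the $\psi_q$-expansion of $\epsilon_{q,\LLT}^n/(1-q)^n$ and recognizing $\prod_i (1-q^{\mu_i})^{-1} = p_\mu(1, q, q^2, \dotsc)$ turns the right side into the Cauchy sum $\sum_\mu z_\mu^{-1} p_\mu(x) p_\mu(1,q,\dotsc) = \prod_{i, j \geq 0} (1-x_i q^j)^{-1}$. Expanding this product in the $s_\lambda$, $m_\lambda$, and $f_\lambda$ bases, using Stanley's hook-content formula $s_\lambda(1,q,q^2,\dotsc) = q^{b(\lambda)}/\prod_{u \in \lambda} (1-q^{h(u)})$ together with the standard $h_n(1,q,\dotsc) = \prod_{k=1}^n (1-q^k)^{-1}$ and $e_n(1,q,\dotsc) = q^{\binom{n}{2}}\prod_{k=1}^n(1-q^k)^{-1}$, and then applying $\Phi^{-1}$, produces the three remaining $\epsilon$-identities. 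The $\eta_{q,\LLT}^n$ block is obtained identically but with the dual Cauchy identity $\sum_\mu sgn(\mu) z_\mu^{-1} p_\mu(x) p_\mu(y) = \prod_{i,j}(1+x_iy_j)$; the Schur coefficient then becomes $s_{\lambda^\tr}(1,q,\dotsc) = q^{b(\lambda')}/\prod_{u\in\lambda'}(1-q^{h(u)})$, and the roles of the $h_\lambda$ and $e_\lambda$ principal specializations swap, matching the stated formulas precisely.

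The main obstacle is justifying the identification $\Phi$, in particular the less-obvious pairings $\Phi(\eta_q^\lambda) = h_\lambda$, $\Phi(\phi_q^\lambda) = m_\lambda$, and $\Phi(\gamma_q^\lambda) = f_\lambda$ (rather than the naive matches suggested by reading Proposition~\ref{p:Yexpand} as a coefficient identity). This reduces to a careful comparison of the two halves of Proposition~\ref{p:Yexpand}: one expresses each trace basis element in $\{\chi_q^\mu\}$ by reading coefficients from the Schur expansion of the appropriate symmetric function basis element (with the $\lambda \leftrightarrow \lambda^\tr$ transposition in the $Y_q$ version requiring care), and then verifies via Kostka-number identities that $\Phi$ sends each basis to its stated image. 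Once $\Phi$ is pinned down, the rest of the argument is routine principal-specialization bookkeeping.
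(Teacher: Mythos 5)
Your argument is correct and is essentially the intended one (the paper omits its proof, but your route is exactly what the surrounding text telegraphs): extract the coefficient of $m_{(n)}$ from Observation~\ref{obs: LLT char different basis} to get the $\psi_q$-expansion via Proposition~\ref{prop:relating power sum char and LLT power sum}, then transport through the Frobenius-type correspondence and apply the Cauchy and dual Cauchy identities at the principal specialization $y=(1,q,q^2,\dotsc)$. The identification $\Phi$ you worry about is the standard one implicit in Proposition~\ref{p:Yexpand} (its consistency reduces to the symmetry of the Gram matrices $\langle h_\lambda,h_\mu\rangle$ and $\langle e_\lambda,h_\mu\rangle$), and your sign bookkeeping $sgn(\mu)(q-1)^n/\prod_i(q^{\mu_i}-1)=(1-q)^n/\prod_i(1-q^{\mu_i})$ checks out.
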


\begin{proof}
Omitted.
\end{proof}

\nocite{*}
\bibliographystyle{eptcs}
\bibliography{my}

\begin{thebibliography}{10}
\providecommand{\bibitemdeclare}[2]{}
\providecommand{\surnamestart}{}
\providecommand{\surnameend}{}
\providecommand{\urlprefix}{Available at }
\providecommand{\url}[1]{\texttt{#1}}
\providecommand{\href}[2]{\texttt{#2}}
\providecommand{\urlalt}[2]{\href{#1}{#2}}
\providecommand{\doi}[1]{doi:\urlalt{https://doi.org/#1}{#1}}
\providecommand{\eprint}[1]{arXiv:\urlalt{https://arxiv.org/abs/#1}{#1}}
\providecommand{\bibinfo}[2]{#2}

\bibitemdeclare{misc}{abreu2022geometric}
\bibitem{abreu2022geometric}
\bibinfo{author}{Alex \surnamestart Abreu\surnameend} \&
  \bibinfo{author}{Antonio \surnamestart Nigro\surnameend}
  (\bibinfo{year}{2022}): \emph{\bibinfo{title}{A geometric approach to
  characters of Hecke algebras}}.
\newblock \eprint{2205.14835}.

\bibitemdeclare{article}{CM}
\bibitem{CM}
\bibinfo{author}{Erik \surnamestart Carlsson\surnameend} \&
  \bibinfo{author}{Anton \surnamestart Mellit\surnameend}
  (\bibinfo{year}{2018}): \emph{\bibinfo{title}{A proof of the shuffle
  conjecture}}.
\newblock {\slshape \bibinfo{journal}{Journal of the American Mathematical
  Society}} \bibinfo{volume}{31}(\bibinfo{number}{3}), pp.
  \bibinfo{pages}{661--697}, \doi{10.1090/jams/893}.

\bibitemdeclare{article}{CHSSkanEKL}
\bibitem{CHSSkanEKL}
\bibinfo{author}{Samuel \surnamestart Clearman\surnameend},
  \bibinfo{author}{Matthew \surnamestart Hyatt\surnameend},
  \bibinfo{author}{Brittany \surnamestart Shelton\surnameend} \&
  \bibinfo{author}{Mark \surnamestart Skandera\surnameend}
  (\bibinfo{year}{2016}): \emph{\bibinfo{title}{Evaluations of {H}ecke algebra
  traces at {K}azhdan-{L}usztig basis elements}}.
\newblock {\slshape \bibinfo{journal}{Electron. J. Combin.}}
  \bibinfo{volume}{23}(\bibinfo{number}{2}), \doi{10.37236/5021}.
\newblock \bibinfo{note}{Paper 2.7, 56 pages}.

\bibitemdeclare{book}{GPCharHecke}
\bibitem{GPCharHecke}
\bibinfo{author}{Meinolf \surnamestart Geck\surnameend} \&
  \bibinfo{author}{G{\"o}tz \surnamestart Pfeiffer\surnameend}
  (\bibinfo{year}{2000}): \emph{\bibinfo{title}{Characters of finite {C}oxeter
  groups and {I}wahori-{H}ecke algebras}}.
\newblock {\slshape \bibinfo{series}{London Mathematical Society Monographs.
  New Series}}~\bibinfo{volume}{21}, \bibinfo{publisher}{The Clarendon Press
  Oxford University Press}, \bibinfo{address}{New York},
  \doi{10.1093/oso/9780198502500.001.0001}.

\bibitemdeclare{unpublished}{GroHai}
\bibitem{GroHai}
\bibinfo{author}{Ian \surnamestart Grojnowski\surnameend} \&
  \bibinfo{author}{Mark \surnamestart Haiman\surnameend}
  (\bibinfo{year}{2006}): \emph{\bibinfo{title}{Affine {H}ecke algebras and
  positivity of {LLT} and {M}acdonald polynomials}}.

\bibitemdeclare{article}{HHLRU05}
\bibitem{HHLRU05}
\bibinfo{author}{J.~\surnamestart Haglund\surnameend},
  \bibinfo{author}{M.~\surnamestart Haiman\surnameend},
  \bibinfo{author}{N.~\surnamestart Loehr\surnameend},
  \bibinfo{author}{J.~\surnamestart Remmel\surnameend} \&
  \bibinfo{author}{A.~\surnamestart Ulyanov\surnameend} (\bibinfo{year}{2005}):
  \emph{\bibinfo{title}{A combinatorial formula for the character of the
  diagonal coinvariants}}.
\newblock {\slshape \bibinfo{journal}{Duke Mathematical Journal}}
  \bibinfo{volume}{126}(\bibinfo{number}{2}), pp. \bibinfo{pages}{195--232},
  \doi{10.1215/S0012-7094-04-12621-1}.

\bibitemdeclare{article}{HaimanHecke}
\bibitem{HaimanHecke}
\bibinfo{author}{M.~\surnamestart Haiman\surnameend} (\bibinfo{year}{1993}):
  \emph{\bibinfo{title}{Hecke algebra characters and immanant conjectures}}.
\newblock {\slshape \bibinfo{journal}{J. Amer. Math. Soc.}}
  \bibinfo{volume}{6}(\bibinfo{number}{3}), pp. \bibinfo{pages}{569--595},
  \doi{10.1090/S0894-0347-1993-1186961-9}.

\bibitemdeclare{article}{SWachsChromQF}
\bibitem{SWachsChromQF}
\bibinfo{author}{John \surnamestart Shareshian\surnameend} \&
  \bibinfo{author}{Michelle~L. \surnamestart Wachs\surnameend}
  (\bibinfo{year}{2016}): \emph{\bibinfo{title}{Chromatic quasisymmetric
  functions}}.
\newblock {\slshape \bibinfo{journal}{Adv. Math.}} \bibinfo{volume}{295}, pp.
  \bibinfo{pages}{497--551}, \doi{10.1016/j.aim.2015.12.018}.

\bibitemdeclare{article}{StanSymm}
\bibitem{StanSymm}
\bibinfo{author}{R.~\surnamestart Stanley\surnameend} (\bibinfo{year}{1995}):
  \emph{\bibinfo{title}{{A symmetric function generalization of the chromatic
  polynomial of a graph}}}.
\newblock {\slshape \bibinfo{journal}{Adv. Math.}} \bibinfo{volume}{111}, pp.
  \bibinfo{pages}{166--194}, \doi{10.1006/aima.1995.1020}.

\bibitemdeclare{book}{StanEC2}
\bibitem{StanEC2}
\bibinfo{author}{R.~\surnamestart Stanley\surnameend} (\bibinfo{year}{1999}):
  \emph{\bibinfo{title}{{Enumerative Combinatorics}}}.
\newblock \bibinfo{volume}{2}, \bibinfo{publisher}{Cambridge University Press},
  \bibinfo{address}{Cambridge}, \doi{10.1017/CBO9780511609589}.

\end{thebibliography}

\end{document}